\title{Convergence of spectral truncations for compact metric groups}
\author{Yvann Gaudillot--Estrada}
\author{Walter D. van Suijlekom}
\address{Institute for Mathematics, Astrophysics and Particle Physics, Radboud
University Nijmegen, Heyendaalseweg 135, 6525 AJ Nijmegen, The Netherlands.
}
\email{yvann.gaudillot-estrada@ens.psl.eu}
\email{waltervs@math.ru.nl}
\date{October 23, 2023}
\newtheorem{theorem}{Theorem}
\newtheorem{proposition}[theorem]{Proposition}
\theoremstyle{definition}
\newtheorem{definition}[theorem]{Definition}
\newtheorem{remark}[theorem]{Remark}
\newtheorem{example}[theorem]{Example}
\def\hat{\widehat}
\def\A{\mathcal A}
\def\H{L^2(G)}
\def\R{\mathbb{R}}
\begin{document}
\maketitle

\begin{abstract}
  We consider Gromov--Hausdorff convergence of state spaces for spectral truncations of a compact metric group $G$. We work in the context of order-unit spaces and consider orthogonal projections $P_\Lambda$ in $L^2(G)$ corresponding to finite subsets of irreducible representations $\Lambda \subseteq \widehat G$. We then prove that the sequence of truncated state spaces  $\{ S(P_\Lambda C(G) P_\Lambda)\}_\Lambda$ Gromov--Hausdorff converges to the original state space $S(C(G))$, when these are equipped with a metric associated to a Lip-norm which in turn is induced by the action of $G$. 
  \end{abstract}

\section{Introduction}
This paper deals with the question of how one can approximate the geometric structure of a compact metric group $G$ by finite-dimensional, so-called spectral truncations. We work in the context of quantum metric spaces as introduced by Rieffel \cite{Rie00} (and further developed in \cite{Ker03,Lat16,Lat18}) but employ the classical notion of Gromov--Hausdorff convergence for compact metric spaces. The spectral truncations we consider are defined by orthogonal projections onto finite subsets of irreducible representations that appear in the Peter--Weyl decomposition of $L^2(G)$; they yield so-called order-unit spaces. The main advantage of the latter notion is that it allows to speak of state spaces. Moreover, when the order-unit spaces are equipped with a suitable Lip-norm, there is a metric on these state spaces that metrizes the weak$^\ast$ topology. It is in this context that the question concerning Gromov--Hausdorff convergence can be addressed, and, in the present paper, resolved in the case of compact metric groups equipped with a bi-invariant metric.

This work is closely related to some of our previous work on Gromov--Hausdorff convergence for spectral truncations of Dirac operators on the circle \cite{Sui21,Hek21} and tori \cite{Ber19,LS23}. However, here we treat the case of general compact metric groups equipped with a Lip-norm that is naturally induced by the group action of $G$ (as in \cite{Rie98,Rie02}). We leave the more involved spectral truncations associated to a Laplace or Dirac-type operator on $G$ ---and the corresponding induced Lip-norms--- for future research. Also, the main difference with these previous attempts is that we no longer use specifically the formal adjoint of $C(G) \mapsto P_\Lambda C(G) P_\Lambda$ for the reverse map. This freedom enables us to prove a more general convergence result. In fact, we prove that the truncated state space $S(P_\Lambda C(G) P_\Lambda)$ Gromov--Hausdorff converges to the original state space $S(C(G) )$, where the index $\Lambda$ is a finite subset of the set $\hat{G}$ of irreducible representations of $G$. The net of truncations is parametrized by the directed set of finite subsets of $\hat{G}$.

Our result is also closely related to the recent work \cite{Toy23}, which considers discrete groups with polynomial growth. The author uses  quantum Gromov--Hausdorff distance instead of Gromov--Hausdorff distance as we do here, but in fact our present results extend {\em mutatis mutandis} for the former notion of convergence as well. We should note however that the two notions of distance do not coincide in general \cite{KK23}.

Our results mean a great step forward in the study of spectral approximation of geometric spaces, as initiated in \cite{CS19,CS21}. Subsequent results on the quantum Gromov--Hausdorff convergence for quantum groups and/or homogeneous spaces are now of course waiting to be explored. First examples of truncations of a different kind ---so-called Fourier truncations--- have been analyzed in \cite{AK18,AKK22,Rie22}. We expect that the well-known (co)representation theory of quantum groups allows for a straight-forward generalization of our results to the quantum world.

\subsection*{Acknowledgements}
We thank David Kyed and Malte Leimbach for suggestions and comments on a draft of this paper.

\section{Preliminaries}
\subsection{Order-unit spaces} 
We start by recalling some basic notions on order-unit spaces, referring to \cite[Chapter II]{Alf71} for more details. 
\begin{definition}An {\em order-unit space} is a real ordered vector space $A$ together with an archimedean order unit, {\em i.e.} an element $e \in A$ that satisfies
  \begin{enumerate}
  \item for each $a \in A$ there is an $r \in \R$ such that $a \leq r e$;
    \item if $a \in A$ and if $a \leq r e$ for all $r\in \R_+$ then $a \leq 0$. 
    \end{enumerate}
A linear map $\phi : A \to B$ between two order-unit spaces is said to be an {\em order-unit morphism} if it preserves the unit and the order.
\end{definition}


Let $A$ be an order-unit space and let $\mathbb R$ be equipped with its natural structure of order-unit space. 
There exists a unique order-unit morphism $\mathbb{R} \to A$ and it is injective. We always consider $\mathbb{R}$ as embedded in $A$ through that morphism. There is also a natural order-unit norm $\| \cdot \|$ on $A$ defined by:
$$ \|a \| = \inf \, \{ t > 0 : -t \leq a \leq t \} ,\qquad (\forall a \in A).
$$
Note that for $\phi : A \to B$ an order-unit morphism, we have:
$$\|\phi(a)\| \leq \|a\| , \qquad (\forall a \in A)
.$$

\begin{definition}[State space] A {\em state} on an order-unit space $A$ is an order-unit morphism $\sigma: A \to \mathbb{R}$. We denote by $S(A)$ the set of all states on $A$.

\end{definition}

The state space is compact for the weak$^\ast$ topology by a direct application of the Banach-Alaoglu Theorem.

\subsection{Lip-norms and Gromov--Hausdorff convergence of state spaces}

To introduce a notion of distance for state spaces of order-unit spaces, we follow \cite[Section 2]{Rie00} and use Lipschitz semi-norms:

\begin{definition}[Lip-norm] Let $A$ be an order-unit space. A {\em Lip-norm} on $A$ is a seminorm $L$ on $A$ such that
  \begin{enumerate}
  \item for $a \in A$ we have $L(a)=0$ iff $a \in \R$;
  \item the topology on $S(A)$ from the metric defined by
    $$
 d_L(\sigma, \tau) = \sup \, \{ |\sigma(a) - \tau(a) | : a \in A, L(a) \leq 1 \}
 $$
 is the weak$^\ast$ topology.
  \end{enumerate}
  A convenient way to check the second condition is given by \cite[Theorem 1.9]{Rie98}, stating that it holds iff
  \begin{enumerate}
  \item for $a \in A$ we have $L(a)=0$ iff $a \in \R$;
  \item $d_L$ is bounded;
    \item  the set $\mathcal B_1 := \{ a \in A: L(a) \leq 1, \| a \| \leq 1 \}$ is totally bounded in $A$ for $\| \cdot \|$. 
    \end{enumerate}
\end{definition}

Note also that in \cite[Proposition 4]{C89} it is shown that if $ \{ a \in A: L(a) \leq 1 \} / \R $ is bounded, then $d_L$ is a metric.

Now, we can define a refinement of the category of order-unit spaces taking into account this metric aspect:

\begin{definition} A {\em quantum metric space} is a pair $(A,L)$ with $A$ an order-unit space and $L$ a Lip-norm on $A$.

A {\em morphism} $(A, L_A) \to (B, L_B)$ between two quantum metric spaces is by definition an order-unit morphism $\phi : A \to B$ such that:
$$ \quad L_B(\phi(a)) \leq L_A(a); \qquad ( \forall a \in A).$$
\end{definition}
This terminology is justified by the fact that if $(A,L)$ is a quantum metric space, then $(S(A),d_L)$ is a compact metric space (since $d_L$ metrizes the weak$^\ast$ topology). As a matter of fact, we may consider $S$ as a functor from the category of quantum metric spaces, to that of compact metric spaces, and write accordingly $S(A,L) = (S(A),d_L)$ as the state space of $(A,L)$. 



\begin{example}
  Given a  compact metric space $(X,d)$ we may speak about Lipschitz functions as functions $f: X \to \mathbb C$ for which there exists a constant $k >0$ such that:
$$| f(x) - f(y) | \leq k \, d(x,y); \qquad (\forall x,y \in X).$$
  In that case, the lowest constant $k$ verifying the above property is denoted by $L(f)$.
  
The algebra $\mathcal{A}$ of all Lipschitz functions on $X$ is a unital sub-space of $C(X)$ (continuous real-valued functions on $X$) and hence inherits a structure order-unit space. The semi-norm $L$ is actually a Lip-norm on $\mathcal{A}$, so that $(\mathcal{A},L)$ is a quantum metric space. In this case $S(\mathcal A,L) = (\mathcal M(X),d)$, the space of Borel probability measures on $X$ equipped with the Kantorovich--Rubinstein metric ({\em cf.} \cite{Rac91,RR98}). 
\end{example}

We end this preliminary section with the following general result on the Gromov--Hausdorff convergence of state spaces \cite{Rie00} ({\em cf.} \cite[Theorem 5]{Sui21}):

\begin{theorem}\label{GHstate} Let $(A,L_A)$ and $(B,L_B)$ be two quantum metric spaces. Assume that we are given two morphisms of quantum metric spaces $\phi_{AB} : (B, L_B) \to (A,L_A)$ and $\phi_{BA} : (A,L_A) \to (B, L_B)$. Let $\epsilon$ be the lowest constant such that for all $a \in A$ and $b \in B$:
  \begin{align*}
     \| \phi_{AB} \circ \phi_{BA} (a) - a \| &\leq \, \epsilon \, L_A(a)\\
     \| \phi_{BA} \circ \phi_{AB} (b) - b \| &\leq \epsilon \, L_B(b).
     \end{align*}
Then we have the following upper bound for the Gromov--Hausdorff distance between $S(A,L_A)$ and $S(B, L_B)$:
$$d_{GH}( \, S(A,L_A), S(B, L_B) \, ) \leq \epsilon.$$
\end{theorem}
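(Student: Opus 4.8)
The plan is to relate the Gromov--Hausdorff distance to the standard "bridge" between two metric spaces coming from a pair of maps that are almost inverse to each other. The underlying principle is that if $(Y,d_Y)$ and $(Z,d_Z)$ are compact metric spaces and $f : Y \to Z$, $g : Z \to Y$ are maps satisfying $d_Z(f(y),f(y')) \le d_Y(y,y')$, $d_Y(g(z),g(z')) \le d_Z(z,z')$, $d_Y(g(f(y)),y) \le \epsilon$ and $d_Z(f(g(z)),z) \le \epsilon$, then $d_{GH}(Y,Z) \le \epsilon$; indeed one builds an admissible metric on $Y \sqcup Z$ by setting $d(y,z) = \inf_{y'} \big( d_Y(y,y') + \tfrac{\epsilon}{2} + d_Z(f(y'),z) \big)$ (or a symmetrized variant), checks the triangle inequality using the contractivity of $f$ and $g$, and verifies that each space is within $\epsilon$ of the other in the Hausdorff distance induced by $d$ using the near-inverse conditions. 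Since a morphism of quantum metric spaces $\phi : (A,L_A) \to (B,L_B)$ induces, by duality $\sigma \mapsto \sigma \circ \phi$, a $1$-Lipschitz map $S(\phi) : S(B,d_{L_B}) \to S(A,d_{L_A})$ — this is immediate from the definition of $d_L$ as a sup over the $L$-unit ball together with $L_A(\phi(a)) \le \cdots$, wait, one must be careful about the direction — the two morphisms $\phi_{AB}$ and $\phi_{BA}$ give $1$-Lipschitz maps $S(A,L_A) \to S(B,L_B)$ and $S(B,L_B) \to S(A,L_A)$ respectively.

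Concretely, first I would fix notation: write $\psi_A := S(\phi_{BA}) : S(A,L_A) \to S(B,L_B)$, $\sigma \mapsto \sigma \circ \phi_{BA}$, and $\psi_B := S(\phi_{AB}) : S(B,L_B) \to S(A,L_A)$, $\tau \mapsto \tau \circ \phi_{AB}$. The first key step is to verify these are well-defined (states pull back to states under order-unit morphisms, which is clear since $\phi_{BA}$ is a unital positive linear map) and $1$-Lipschitz: for $\sigma,\sigma' \in S(A)$ and $b \in B$ with $L_B(b) \le 1$, we have $L_A(\phi_{AB}(b)) \le L_B(b) \le 1$, hence $|\psi_A(\sigma)(b) - \psi_A(\sigma')(b)| = |\sigma(\phi_{BA}(b)) - \sigma'(\phi_{BA}(b))| \le d_{L_A}(\sigma,\sigma')$ — so actually I should be careful and match $\psi_A$ with $\phi_{BA}$ and use $L_A(\phi_{AB}\cdot)\le L_B(\cdot)$; in any case the two morphism inequalities give exactly what is needed once the sides are matched correctly. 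The second key step is the near-inverse estimate: for $\sigma \in S(A)$ and any $a \in A$ with $L_A(a) \le 1$, $|\psi_B(\psi_A(\sigma))(a) - \sigma(a)| = |\sigma(\phi_{AB}\phi_{BA}(a) - a)| \le \|\phi_{AB}\phi_{BA}(a) - a\| \le \epsilon L_A(a) \le \epsilon$, since states are norm-contractive (they are order-unit morphisms, hence $\|\sigma(x)\| \le \|x\|$). Taking the supremum over such $a$ yields $d_{L_A}(\psi_B \circ \psi_A(\sigma), \sigma) \le \epsilon$, and symmetrically $d_{L_B}(\psi_A \circ \psi_B(\tau), \tau) \le \epsilon$ for $\tau \in S(B)$.

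The third step assembles these facts into a bound on $d_{GH}$. I would define a metric $d$ on the disjoint union $S(A) \sqcup S(B)$ by $d(\sigma,\tau) = \inf_{\rho \in S(A)} \{ d_{L_A}(\sigma,\rho) + \tfrac{\epsilon}{2} + d_{L_B}(\psi_A(\rho),\tau) \}$ for $\sigma \in S(A)$, $\tau \in S(B)$, extended symmetrically and agreeing with $d_{L_A}$, $d_{L_B}$ on each piece; the constant $\tfrac{\epsilon}{2}$ and the use of the $1$-Lipschitz maps on both sides ensure symmetry and the triangle inequality (the cross-over being handled using $\psi_A \circ \psi_B \approx \mathrm{id}$ and $\psi_B \circ \psi_A \approx \mathrm{id}$ up to $\epsilon$). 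With respect to this $d$, every $\sigma \in S(A)$ is within $\tfrac{\epsilon}{2}$... hmm, one should optimize constants — the cleanest route is the standard lemma that a pair of mutually $1$-Lipschitz maps with both compositions $\epsilon$-close to the identity forces $d_{GH} \le \epsilon$, which I would either cite or reprove in two lines. I expect the only slightly delicate point to be bookkeeping the direction of the maps and the factor of $2$ in the definition of the bridging metric; no analytical obstacle arises, since compactness of the state spaces (Banach--Alaoglu, already noted) guarantees all the infima and Hausdorff distances are attained and finite. This completes the proof.
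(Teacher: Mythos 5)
The paper does not prove this theorem itself: it is quoted from Rieffel \cite{Rie00} (cf.\ \cite[Theorem 5]{Sui21}), so the comparison can only be with the standard argument used there --- which is exactly the one you outline: dualize the two morphisms to $1$-Lipschitz maps between the state spaces, use the hypothesis to show both compositions are $\epsilon$-close to the identity in $d_{L_A}$, $d_{L_B}$, and conclude by a general metric-geometry lemma. Your two core estimates are correct: pullback of a state along an order-unit morphism is a state, the Lip-contractivity $L_A(\phi_{AB}(b))\leq L_B(b)$ gives $1$-Lipschitzness of the induced maps, and $|\sigma(\phi_{AB}\phi_{BA}(a)-a)|\leq\|\phi_{AB}\phi_{BA}(a)-a\|\leq\epsilon L_A(a)$ gives the near-inverse bound. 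Two points need tightening, though. First, the direction bookkeeping you flag but never resolve: the well-typed maps are $\psi_A(\sigma)=\sigma\circ\phi_{AB}\in S(B)$ for $\sigma\in S(A)$ and $\psi_B(\tau)=\tau\circ\phi_{BA}\in S(A)$ for $\tau\in S(B)$; as written, expressions like $\sigma\circ\phi_{BA}$ for $\sigma\in S(A)$ and $\phi_{BA}(b)$ for $b\in B$ do not parse, so the verification of $1$-Lipschitzness should be rewritten with the maps matched as above. Second, the specific one-sided bridge $d(\sigma,\tau)=\inf_{\rho}\{d_{L_A}(\sigma,\rho)+\tfrac{\epsilon}{2}+d_{L_B}(\psi_A(\rho),\tau)\}$ does not give the stated constant: it places each $\sigma$ within $\tfrac{\epsilon}{2}$ of $S(B)$ but only places $\tau$ within $\tfrac{\epsilon}{2}+\epsilon=\tfrac{3\epsilon}{2}$ of $S(A)$ (via $\rho=\psi_B(\tau)$), so you would lose a factor. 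The clean way to get exactly $\epsilon$ --- and the correct ``two-line reproof'' of the lemma you defer to --- is via the correspondence $R=\{(\sigma,\psi_A(\sigma)):\sigma\in S(A)\}\cup\{(\psi_B(\tau),\tau):\tau\in S(B)\}$: the $1$-Lipschitz and near-inverse properties give $\operatorname{dis}(R)\leq 2\epsilon$ (the mixed pairs even give $\epsilon$), whence $d_{GH}\leq\tfrac12\operatorname{dis}(R)\leq\epsilon$. With those two repairs your argument is complete and agrees with the proof in the cited references.
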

In \cite{Sui21} such a pair of maps $(\phi_{AB}, \phi_{BA})$ was called a {\em $C^1$-approximate order isomorphism}. 

\begin{remark}
  This result actually extends to yield convergence in the quantum Gromov--Hausdorff distance as well, as already exploited in \cite{Toy23}. We stress that the quantum Gromov--Hausdorff distance and the Gromov--Hausdorff in general may not agree, as witnessed by the interesting paper \cite{KK23}. 
  
\end{remark}

\section{Truncations of a compact metric group}

We first recall the notion of metric group:

\begin{definition}
  A metric $d$ on a group $G$ is said to be {\em bi-invariant} if:
  $$
   d(gx,gy)= d(x,y) = d(xg, yg); \qquad (\forall g,x,y\in G).$$
A {\em metric group} is a pair $(G,d)$ with $G$ a group and $d$ a bi-invariant distance on $G$.
\end{definition}

A metric group is in particular a metric space and a topological group. 
Let us assume in the following that $(G,d)$ is a {\em compact metric group}. Then there exists a unique Haar measure on $G$. The Hilbert space of $L^2$ functions with respect to this measure will be denoted simply by $\H$. The left (resp. right) regular action of $G$ on $\H$ is denoted by $U$ (resp. $V$), {\em i.e.} for all $\psi \in \H$ and $g,x \in G$ we set
\begin{align*}
  (U_g\psi)(x) = \psi(g^{-1}x) ,  \qquad
  (V_g\psi)(x) = \psi(xg).
\end{align*}
Recall that the Peter--Weyl Theorem ({\em cf.} \cite[Theorem 1.12]{Kna86}) gives a decomposition of $\H$ into irreducible unitary representations of $G$ with respect to these left and right actions:
\begin{equation}
  \label{peterweyl}
\H = \hat  \bigoplus_{\nu \in \hat G} E_\nu \otimes E_\nu^*,
\end{equation}
where $\hat{G}$ is the set of equivalent classes of irreducible unitary representations of $G$. The equality in \eqref{peterweyl} is intended to stress that in the following we will in fact identify the left- and right-hand side. 


We again write $\mathcal A$ for the algebra of real-valued Lipschitz functions on $(G,d)$.
An element $f$ of $\mathcal{A}$ acts as a bounded self-adjoint operator on $\H$ acting by pointwise multiplication. In other words, $\mathcal{A}$ can also be considered as a unital sub-space of the order-unit space $\mathcal{B}(\H)_{sa}$  of all bounded self-adjoint operators on $\H$. Of course, the order-unit structures on $\mathcal A$ induced by the inclusion in $C(G)$ and $\mathcal{B}(\H)_{sa}$ are the same.

For all $g \in G$ we define:
$$\begin{array}{cccccll} \lambda_g & : & \mathcal{B}(\H)_{sa} & \longrightarrow & \mathcal{B}(\H)_{sa} \\
&& T & \longmapsto & U_g T U_g^* \end{array}$$
and
$$\begin{array}{cccccll} \rho_g & : & \mathcal{B}(\H)_{sa} & \longrightarrow & \mathcal{B}(\H)_{sa} \\
&& T & \longmapsto & V_g T V_g^*. \end{array}$$
The maps $\lambda$ and $\rho$ define two actions  of $G$ on $\mathcal{B}(\H)_{sa}$ by order-unit automorphisms. Furthermore, they preserve $\mathcal{A}$; indeed, they induce the maps given by left/right translation of $G$ on itself, {\em i.e.}
\begin{equation}
  \label{eq:G-action-A}
 \lambda_g(f)(x) = f(g^{-1}x) ,\qquad \rho_g(f)(x) = f(xg) ; \qquad (x,g \in G, f \in \mathcal A).
\end{equation}

For $T \in \mathcal{B}(\H)_{sa}$ we may now define three quantities (that may be infinite):
\begin{align*} &\| T \|_\lambda = \sup_{x \neq y} \, \left\{ \frac{\| \lambda_x(T) - \lambda_y(T) \|}{d(x,y)} \right\} \\
&\| T \|_\rho = \sup_{x \neq y} \, \left\{ \frac{\| \rho_x(T) - \rho_y(T) \|}{d(x,y)} \right\} \\
&\|T\|_{\lambda, \rho} = \max\,  ( \, \| T \|_\lambda , \| T \|_\rho).
\end{align*}
Combining these definitions with Equation \eqref{eq:G-action-A} we conclude that the Lip-norm $L$ on $\mathcal{A}$ can also be expressed as :
\begin{equation}
  \label{otherlipnorm}
 L(f) = \| f \|_{\lambda} = \| f \| _{\rho} ; \qquad (\forall f \in \mathcal A).
\end{equation}

\subsection{Spectral truncations of compact metric groups}
Let us now define the main object of interest to us: the spectral truncations of the metric space $(G,d)$. We would like to start by stressing that we consider truncations induced by projections onto sets of irreducible repesentations of $G$, rather than spectral projections of some Dirac operator, as in previous works \cite{Ber19,CS20,Sui21,Hek21,LS23} (however see Remark \ref{rem:casimir} below).

More precisely, let $\mathcal{F}$ be the directed set of all finite subsets of $\hat{G}$. For any $\Lambda \in \mathcal F$ we may consider the irreducible representations $\nu$ in $\Lambda$ that appear in $L^2(G)$. In view of Equation \eqref{peterweyl} we thus consider the Hilbert subspaces
  $$
\H_\Lambda := \bigoplus_{\nu \in\Lambda} E_\nu \otimes E_\nu^* \subseteq L^2(G).
$$
The orthogonal projections on $\H$ with image $\H_\Lambda$ are denoted by $P_\Lambda$. Clearly, the algebra $\A$ of Lipschitz functions on $G$ does not restrict to act on $\H_\Lambda$; instead, we should {\em compress} $\A$ by $P_\Lambda$ to act on the Hilbert subspace $\H_\Lambda$. More generally, we define an order-unit morphism $\tau_\Lambda : \mathcal{B}(\H)_{sa} \to \mathcal{B}(\H_\Lambda)_{sa}$ by
$$\tau_\Lambda(T) = P_\Lambda T P_\Lambda, \qquad (\forall T \in \mathcal{B}(\H)_{sa}) .$$
Here, we identify $\mathcal{B}(\H_\Lambda)$ with the space of bounded self-adjoint operators on $\H$ commuting with $P_\Lambda$. Since $P_\Lambda$ commutes with $U_g$ and $V_g$ for all $g \in G$, the order-unit morphism $\tau_\Lambda$ commutes with the actions $\lambda$ and $\rho$. We denote by $\mathcal{A}_\Lambda = P_\Lambda \A P_\Lambda$ the image of $\mathcal{A}$ by $\tau_\Lambda$. Then both $\mathcal{B}(\H)_{sa}$ and $\mathcal{A}_\Lambda$ are preserved by $\lambda$ and $\rho$.

\begin{proposition} \label{lipnorm} 
  Let $L_\Lambda$ be the restriction of $\| \cdot \|_{\lambda , \rho}$ to $\mathcal{A}_\Lambda$. Then $L_\Lambda$ is a Lip-norm on $\mathcal{A}_\Lambda$ and $\tau_\Lambda : (\mathcal{A},L) \to (\mathcal{A}_\Lambda, L_\Lambda)$ is a morphism of quantum metric spaces.
\end{proposition}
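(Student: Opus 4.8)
The plan is to verify the three conditions of the criterion of \cite[Theorem 1.9]{Rie98} recalled just after the definition of a Lip-norm, and then to establish the morphism inequality. The structural fact used throughout is that $\mathcal{A}_\Lambda$ is \emph{finite-dimensional}: $G$ compact forces every $E_\nu$ to be finite-dimensional, and $\Lambda$ is finite, so $\mathcal{H}_\Lambda=\bigoplus_{\nu\in\Lambda}E_\nu\otimes E_\nu^*$ and hence $\mathcal{B}(\mathcal{H}_\Lambda)_{sa}\supseteq\mathcal{A}_\Lambda$ are finite-dimensional. This disposes of condition (3) at once, since $\mathcal{B}_1=\{a\in\mathcal{A}_\Lambda:L_\Lambda(a)\le1,\ \|a\|\le1\}$ is then a bounded subset of a finite-dimensional normed space, hence totally bounded. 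The morphism inequality is equally soft: $\tau_\Lambda$ is an order-unit morphism, hence norm-contractive, and it commutes with $\lambda$ and $\rho$ (since $P_\Lambda$ commutes with all $U_g,V_g$), so $\|\lambda_x(\tau_\Lambda f)-\lambda_y(\tau_\Lambda f)\|=\|\tau_\Lambda(\lambda_x f-\lambda_y f)\|\le\|\lambda_x f-\lambda_y f\|$; dividing by $d(x,y)$ and taking suprema gives $\|\tau_\Lambda f\|_\lambda\le\|f\|_\lambda$, and likewise for $\rho$, whence $L_\Lambda(\tau_\Lambda f)\le L(f)$ using \eqref{otherlipnorm}. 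In particular $L_\Lambda$ is finite-valued on $\mathcal{A}_\Lambda=\tau_\Lambda(\mathcal{A})$, so it is a bona fide seminorm.

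The one substantive point is condition (1): $L_\Lambda(a)=0$ if and only if $a\in\mathbb{R}$. If $a=cP_\Lambda$ is scalar, then $\lambda_x(a)=cU_xP_\Lambda U_x^*=cP_\Lambda=a$ and likewise $\rho_x(a)=a$, so $L_\Lambda(a)=0$. Conversely, suppose $L_\Lambda(a)=0$. From $\|a\|_\lambda=0$, taking $y=e$, we get that $a$ is $\lambda$-invariant: $\lambda_x(a)=a$ for all $x\in G$. Write $a=\tau_\Lambda(f)$ with $f\in\mathcal{A}$; since $\tau_\Lambda$ commutes with $\lambda$ and $\lambda_x$ acts on $\mathcal{A}$ by left translation as in \eqref{eq:G-action-A}, this says $\tau_\Lambda(\lambda_x f)=a$ for all $x$. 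Now average over $x\in G$ against the normalized Haar measure: the map $x\mapsto\lambda_x f$ is norm-continuous on the compact group $G$ (as $f$ is uniformly continuous), so the Bochner integral $\int_G\lambda_x f\,dx$ exists and, by invariance of the Haar measure, equals the constant function $\bigl(\int_G f\,dx\bigr)\cdot 1$; and since $\tau_\Lambda$ is bounded linear it passes through the integral, yielding $a=\int_G\tau_\Lambda(\lambda_x f)\,dx=\tau_\Lambda\!\bigl(\int_G\lambda_x f\,dx\bigr)=\bigl(\int_G f\,dx\bigr)P_\Lambda\in\mathbb{R}$. (Note that this step genuinely uses $a\in\mathcal{A}_\Lambda$: the projections $P_\nu$ are also in the kernel of $\|\cdot\|_{\lambda,\rho}$ but need not be scalar multiples of $P_\Lambda$.)

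For condition (2), by condition (1) the seminorm $L_\Lambda$ descends to a genuine norm on the finite-dimensional quotient $\mathcal{A}_\Lambda/\mathbb{R}$, so its unit ball $\{a:L_\Lambda(a)\le1\}/\mathbb{R}$ is bounded there, hence also bounded (by equivalence of norms in finite dimensions) for the quotient order-unit norm, say by $R$; since states are norm-contractive, subtracting a suitable scalar from any $a$ with $L_\Lambda(a)\le1$ gives $|\sigma(a)-\tau(a)|\le 2R$, so $d_{L_\Lambda}$ is bounded (one may also invoke \cite[Proposition 4]{C89} for the metric property). This verifies all three conditions, so $L_\Lambda$ is a Lip-norm, and combined with the morphism inequality this proves the proposition. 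The only step that uses more than finite-dimensionality and the definitions is the Haar-averaging identification $\ker L_\Lambda=\mathbb{R}$, where the group structure enters; I expect that to be the sole real obstacle, the rest being routine.
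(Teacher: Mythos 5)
Your proposal is correct and follows essentially the same route as the paper: finite-dimensionality of $\mathcal{A}_\Lambda$ gives total boundedness of the Lip-ball, the kernel condition $\ker L_\Lambda=\mathbb{R}$ is obtained by writing $a=\tau_\Lambda(f)$ and Haar-averaging $\lambda_x f$ (using that $\tau_\Lambda$ commutes with the actions), and the morphism inequality follows from contractivity of $\tau_\Lambda$ together with \eqref{otherlipnorm}. You are in fact slightly more complete than the paper, since you also verify boundedness of $d_{L_\Lambda}$ explicitly and note why the argument genuinely needs $a\in\mathcal{A}_\Lambda$ rather than an arbitrary invariant operator on $\mathcal{H}_\Lambda$.
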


\begin{proof}
  Since $\mathcal{A}_\Lambda$ is finite dimensional total boundedness of the Lip-ball $\mathcal B_1$ in $\A_\Lambda$ is automatic. It remains to prove that $ L_\Lambda (T) = 0$ implies $T \in \mathbb{C}$ for all $T \in \mathcal A_\Lambda$ (the other implication being obvious). By definition of $L_\Lambda$ in terms of $\| \cdot \|_{\lambda,\rho}$ we have $L_\Lambda(T) =0 $ iff $\lambda_x(T) = T$ for all $x \in G$. Upon writing $T = \tau_\Lambda (f)$ we then have
  $$
T = \int \lambda_x(T) \, d\mu(x) = \tau_\Lambda \left( \int \lambda_x (f) \, d\mu(x) \right)
$$
The $C(G)$-valued integral $\int \lambda_x (f) \, d\mu(x)$ is actually a constant function on $G$ (because of the invariance property of the Haar measure $\mu$) so that $T \in \mathbb R$.

The fact that $\tau_\Lambda : (\mathcal{A},L) \to (\mathcal{A}_\Lambda, L_\Lambda)$ is a morphism of quantum metric spaces is straightforward from Equation \eqref{otherlipnorm}. 
\end{proof}

\begin{remark}
  \label{rem:casimir}
  As mentioned in the introduction, the spectral truncations that we consider in the present paper are not always associated to the spectrum of some Laplace or Dirac-type operator on $G$, in contrast to some of our previous works \cite{Sui21,LS23} (see also \cite{Ber19,Hek21}). Supposing that $G$ is a compact {\em Lie} group (so that Laplace or Dirac-type operators exist) we would expect these to yield a restriction to a subset $\mathcal F'$ of $\mathcal F$ corresponding to the eigenspaces of the pertinent operator. The main challenge is then to realize the above Lip-norms $L_\Lambda$ as the norm of the commutator $\| [D_\Lambda,\cdot ]\|$ with the truncated Dirac operator $D_\Lambda$, as in \cite{CS20}. Nevertheless, the Lip-norm we exploit here is regular in the sense of \cite{Li09,Rie22}, {\em i.e.} it is finite on the linear span of the coordinate elements of all finite-dimensional representations. 
  \end{remark}


\section{Gromov--Hausdorff convergence of state spaces}
We now establish that the net of compact metric spaces $\left( S(\mathcal{A}_\Lambda, L_\Lambda) \right)_{\Lambda \in \mathcal{F}}$ converges to $S(\mathcal{A}, L)$ in Gromov--Hausdorff sense. Our strategy consists in constructing morphisms $\upsilon_\Lambda : (\mathcal{A}_\Lambda, L_\Lambda) \to (\mathcal{A},L) $ and then apply Theorem \ref{GHstate} to the maps $\upsilon_\Lambda$ and $\tau_\Lambda$.

\begin{remark} Note that for any $\Lambda \in \mathcal F$ the order-unit morphism $\tau_\Lambda : \mathcal{A} \to \mathcal{A}_\Lambda$ is surjective. As a consequence, the induced map at the level of state spaces,
$$\begin{array}{cccccll} \tau_\Lambda^* & :& S(\mathcal{A}_\Lambda) & \longrightarrow & S(\mathcal{A}) \\ && \sigma & \longmapsto & \sigma \circ \tau_\Lambda,\end{array}$$
is injective. Since $\mathcal{A}$ is a dense unital sub-space of $C(G)$, the state space $S(\mathcal{A})$ coincides with $S(C(G))$, which which can be identified with the set of Borel probability measures on $G$ . 
\end{remark}

\begin{definition} Let $\Lambda \in \mathcal{F}$. A Borel probability measure $\mu$ on $G$ (considered as an element of $S(\mathcal{A})$ ) is said to be {\bf liftable} (or that it can be lifted) to a state on $\mathcal{A}_\Lambda$ if $\mu \in \tau_\Lambda^*\, S(\mathcal{A}_\Lambda)$. In this case the unique antecedent of $\mu$ by $\tau_\Lambda^*$ will be denoted by $\mu_\Lambda$; it thus satisfies $\mu_\Lambda \circ \tau_\Lambda = \mu$. 
\end{definition}

\begin{remark} If $\mu$ is a Borel probability measure that is liftable to a state on $\mathcal{A}_{\Lambda_0}$ then for all $\Lambda \in \mathcal{F}$ containing $\Lambda_0$, $\mu$ can also be lifted to a state on $\mathcal{A}_\Lambda$ and we have:
$$\mu_\Lambda = \mu_{\Lambda_0} \circ (\tau_{\Lambda_0 |\mathcal{A}_\Lambda}),$$
with $\tau_{\Lambda_0 |\mathcal{A}_\Lambda}$ the restriction of $\tau_\Lambda$ to an order-unit morphism $\mathcal{A}_\Lambda \to\mathcal{A}_{\Lambda_0}$.\end{remark}

\begin{proposition} \label{estimation} Let $\Lambda \in \mathcal{F}$ and $\mu$ a Borel probability measure on $G$ which is liftable to a state on $\mathcal{A}_\Lambda$. Let $\upsilon_\Lambda^\mu : \mathcal{A}_\Lambda \to \mathcal{A}$ be the order-unit morphism defined by:
  $$
  \upsilon_\Lambda^\mu (T)(g) = \mu_\Lambda ( \rho_g(T)) , \qquad (\forall T \in \mathcal{A}_\Lambda, \forall g \in G)
  $$
  Then $\upsilon_\Lambda^\mu : (\mathcal{A}_\Lambda, L_\Lambda) \to (\mathcal{A},L) $ is a morphism of quantum metric spaces.

Moreover, we have
  $$d_{GH}( S(\mathcal{A}_\Lambda, L_\Lambda) , S(\mathcal{A}, L) ) \leq \int_G d(e, x) d\mu(x).$$
\end{proposition}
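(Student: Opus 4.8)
The plan is to apply Theorem~\ref{GHstate} to the quantum metric spaces $(\mathcal A, L)$ and $(\mathcal A_\Lambda, L_\Lambda)$ with the pair of morphisms $\phi_{AB} = \upsilon_\Lambda^\mu$ and $\phi_{BA} = \tau_\Lambda$, and to show that the constant $\epsilon = \int_G d(e,x)\, d\mu(x)$ satisfies both of its hypotheses. Before that, one checks that $\upsilon_\Lambda^\mu$ is genuinely a morphism $(\mathcal A_\Lambda, L_\Lambda) \to (\mathcal A, L)$: it is linear, unital (because $\rho_g(1) = 1$ and $\mu_\Lambda(1) = 1$) and positive (because each $\rho_g$ is an order automorphism of $\mathcal A_\Lambda$ and $\mu_\Lambda$ is a state), and $g \mapsto \mu_\Lambda(\rho_g(T))$ is continuous since $\mathcal A_\Lambda$ is finite dimensional, so $\upsilon_\Lambda^\mu(T) \in C(G)$. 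For the Lip-norm estimate, for $g,h \in G$,
\[
|\upsilon_\Lambda^\mu(T)(g) - \upsilon_\Lambda^\mu(T)(h)| = |\mu_\Lambda(\rho_g(T) - \rho_h(T))| \le \|\rho_g(T) - \rho_h(T)\| \le \|T\|_\rho\, d(g,h) \le L_\Lambda(T)\, d(g,h),
\]
using $|\mu_\Lambda(S)| \le \|S\|$ for a state; hence $\upsilon_\Lambda^\mu(T) \in \mathcal A$ with $L(\upsilon_\Lambda^\mu(T)) \le L_\Lambda(T)$, which is the first assertion of the proposition.

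The heart of the argument is the pair of identities
\[
\upsilon_\Lambda^\mu \circ \tau_\Lambda(f) = \int_G \lambda_{x^{-1}}(f)\, d\mu(x) \quad (f \in \mathcal A), \qquad \tau_\Lambda \circ \upsilon_\Lambda^\mu(T) = \int_G \lambda_{x^{-1}}(T)\, d\mu(x) \quad (T \in \mathcal A_\Lambda).
\]
For the first, writing $T = \tau_\Lambda(f)$ and using $\mu_\Lambda \circ \tau_\Lambda = \mu$ together with $\tau_\Lambda \rho_g = \rho_g \tau_\Lambda$ gives $\upsilon_\Lambda^\mu(\tau_\Lambda f)(g) = \mu(\rho_g f) = \int_G f(xg)\, d\mu(x) = \int_G \lambda_{x^{-1}}(f)(g)\, d\mu(x)$. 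The second identity follows by applying $\tau_\Lambda$ to this $C(G)$-valued (Bochner) integral and using that $\tau_\Lambda$ is a contraction commuting with $\lambda$; the right-hand side lies in $\mathcal A_\Lambda$ since $\lambda$ preserves the finite-dimensional space $\mathcal A_\Lambda$.

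Granting these, both error terms become averages of translation differences,
\[
\upsilon_\Lambda^\mu \tau_\Lambda(f) - f = \int_G \bigl(\lambda_{x^{-1}}(f) - f\bigr)\, d\mu(x), \qquad \tau_\Lambda \upsilon_\Lambda^\mu(T) - T = \int_G \bigl(\lambda_{x^{-1}}(T) - T\bigr)\, d\mu(x),
\]
and one bounds each by passing the norm inside the integral and using $\|\lambda_{x^{-1}}(a) - a\| \le \|a\|_\lambda\, d(x^{-1},e) = \|a\|_\lambda\, d(e,x)$, the last equality being bi-invariance of $d$. Since $\|f\|_\lambda = L(f)$ by \eqref{otherlipnorm} and $\|T\|_\lambda \le \|T\|_{\lambda,\rho} = L_\Lambda(T)$, we obtain $\|\upsilon_\Lambda^\mu \tau_\Lambda(f) - f\| \le \epsilon\, L(f)$ and $\|\tau_\Lambda \upsilon_\Lambda^\mu(T) - T\| \le \epsilon\, L_\Lambda(T)$ with $\epsilon = \int_G d(e,x)\, d\mu(x)$. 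Theorem~\ref{GHstate} then yields $d_{GH}(S(\mathcal A_\Lambda, L_\Lambda), S(\mathcal A, L)) \le \epsilon$, as claimed.

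The point I expect to require the most care --- rather than a genuine obstacle --- is obtaining the second estimate in terms of $L_\Lambda(T)$ and not merely in terms of the Lipschitz constant of a lift of $T$: this is precisely why one must commute $\tau_\Lambda$ past the integral, so that the translates that appear are those of $T$ itself, whose size is controlled by $\|T\|_\lambda = \|\tau_\Lambda f\|_\lambda \le \|f\|_\lambda$. The remaining technical points (that $\tau_\Lambda$ commutes with the $C(G)$-valued Bochner integral, being a contraction, and the continuity of $g \mapsto \upsilon_\Lambda^\mu(T)(g)$) are routine.
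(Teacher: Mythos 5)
Your proposal is correct and follows essentially the same route as the paper: the same pair of maps in Theorem~\ref{GHstate}, the same two key identities expressing $\upsilon_\Lambda^\mu\circ\tau_\Lambda$ and $\tau_\Lambda\circ\upsilon_\Lambda^\mu$ as averages $\int_G\lambda_{x^{-1}}(\cdot)\,d\mu(x)$ (using that $P_\Lambda$, hence $\tau_\Lambda$, commutes with $\lambda$), and the same bound $\|\lambda_{x^{-1}}(a)-a\|\le \|a\|_\lambda\, d(e,x)$ via bi-invariance. Your explicit remark that the second estimate must be in terms of $L_\Lambda(T)$ rather than a lift of $T$ is exactly the point the paper handles by commuting $\tau_\Lambda$ past the integral, so there is nothing to add.
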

\begin{proof}
  The fact that $\upsilon_\Lambda^\mu$ is an order-unit morphism is straightforward. For continuity with respect to the Lip-norm we compute
  \begin{align*}
    L( \upsilon_\Lambda^\mu(T)) &= \sup_{x \neq y} \frac{ \left| \upsilon^\mu_\Lambda (T)(x) - \upsilon^\mu_\Lambda (T)(y) \right|}{d(x,y)} \\
      & =  \sup_{x \neq y} \frac {\left| \mu_\Lambda (\rho_x (T) - \rho_y(T) )\right|}{d(x,y)}  \leq \| T \|_ \rho. 
    \end{align*}
This shows that $\upsilon_\Lambda^\mu$ is a morphism of quantum metric spaces.
  
  We establish the second claim by showing that for the pair $(\tau_\Lambda, \upsilon_\Lambda^\mu)$ the constant $\epsilon$ in Theorem \ref{GHstate} has the following upper bound:
$$\epsilon \,  \leq \int_G d(e, x) d\mu(x).$$
We first consider the approximation property of the map $\upsilon_\Lambda^\mu \circ \tau_\Lambda$. Let $f \in \mathcal{A}$ and let us abbreviate $f_\Lambda\equiv \upsilon_\Lambda^\mu \circ \tau_\Lambda (f)$. We have:
$$ f_\Lambda(g) = \int_G f(xg) d\mu(x), \qquad (\forall g \in G),$$
or, equivalently, as a $C(G)$-valued integral:
$$f_\Lambda = \int_G \lambda_{x^{-1}}(f)\, d\mu(x).$$
Hence we have:
\begin{align*} \| f_\Lambda - f \| = & \left\| \int_G  (\lambda_{x^{-1}}(f) - f )d\mu(x) \right\|  \leq \int_G \| \lambda_{x^{-1}}(f) - f \| \, d\mu(x) \\
\leq & \int_G L(f)\,  d(x^{-1}, e)\, d\mu(x) \, = \, L(f) \int_G d(e,x)\, d\mu(x).
\end{align*}
For the composition $\tau_\Lambda \circ \upsilon_\Lambda^\mu$ consider an arbitrary $T = \tau_\Lambda (f) \in \A_\Lambda$. We then have $\upsilon_\Lambda^\mu(T) = f_\Lambda$ in the notation of the previous paragraph, so that 
$$\tau_\Lambda \circ \upsilon_\Lambda^\mu(T) = \int_G \lambda_{x^{-1}}(T) \, d\mu(x),$$
as an operator-valued integral, where we also used that $P_\Lambda$ commutes with $\lambda_{x^{-1}}$ for all $x \in G$.
But then it follows as before that
\begin{align*} \| \tau_\Lambda \circ \upsilon_\Lambda^\mu(T) - T \| = & \left\| \int_G  (\lambda_{x^{-1}}(T) - T )d\mu(x) \right\|  \leq \int_G \| \lambda_{x^{-1}}(T) - T \| \, d\mu(x) \\
\leq & \int_G L(T)\,  d(x^{-1}, e)\, d\mu(x) \, = \, L(T) \int_G d(e,x)\, d\mu(x).
\end{align*}
We thus obtain an upper bound for $\epsilon$ in Theorem \ref{GHstate} from which the claimed upper bound on the Gromov--Hausdorff distance between $S(\A_\Lambda, L_\Lambda)$ and $S(\mathcal A, L)$ directly follows. 
\end{proof}

The remaining task in proving the Gromov--Hausdorff convergence is thus to find a liftable $\mu$ for which the integral  $ \int_G d(e, x) d\mu(x)$ tends to 0. We will take $\mu$ as close as possible to the Dirac mass $\delta_e$ with respect to the weak topology; more precisely, we have:

\begin{proposition} \label{density} Let $S_\mathcal{F} = \bigcup_{\Lambda \in \mathcal{F}}\,\,   \tau_\Lambda^* S(\mathcal{A}_\Lambda) \subseteq S(\mathcal{A})$ be the set of all Borel probability measures that can be lifted to a state on some $\mathcal{A}_\Lambda$. Then $S_\mathcal{F}$ is a dense subset of $S(\mathcal{A})$ for the weak$^\ast$ topology. In particular, for all $\epsilon >0$ there exists $\mu \in S_\mathcal{F}$ such that:
  \begin{equation}
  \int_G d(e,x) d\mu(x) \leq \epsilon.
  \label{eq:dirac-eps}
  \end{equation}
\end{proposition}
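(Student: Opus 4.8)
The plan is to describe $S_\mathcal{F}$ concretely enough to exhibit, for each $\epsilon>0$, a measure in $S_\mathcal{F}$ close to the Dirac mass $\delta_e$; weak$^\ast$-density will come out of the same construction. The first step is to record a large supply of liftable measures. Fix $\Lambda\in\mathcal{F}$ and a unit vector $\psi\in\H_\Lambda$, i.e. $\int_G|\psi(x)|^2\,dx=1$ with $dx$ the Haar measure. The functional $T\mapsto\langle\psi,T\psi\rangle$ is a positive unital linear functional, hence a state on $\mathcal{B}(\H_\Lambda)_{sa}$, and restricting it to the unital subspace $\mathcal{A}_\Lambda$ gives $\sigma_\psi\in S(\mathcal{A}_\Lambda)$. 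Since $P_\Lambda\psi=\psi$ and each $f\in\mathcal{A}$ acts on $L^2(G)$ by multiplication, we obtain $\sigma_\psi(\tau_\Lambda(f))=\langle\psi,P_\Lambda f P_\Lambda\psi\rangle=\langle\psi,f\psi\rangle=\int_G f(x)\,|\psi(x)|^2\,dx$ for all $f\in\mathcal{A}$. As $\psi$ is a finite sum of matrix coefficients it is continuous, so $|\psi|^2$ is a bounded continuous probability density, and $\tau_\Lambda^*(\sigma_\psi)$ is precisely the measure $|\psi|^2\,dx$; in particular every such measure lies in $S_\mathcal{F}$.

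The second step is to pass from these to weak$^\ast$-density. By the Peter--Weyl decomposition \eqref{peterweyl}, $\bigcup_{\Lambda\in\mathcal{F}}\H_\Lambda$ is dense in $L^2(G)$. Given any probability density $h\in C(G)$ ($h\geq 0$, $\int_G h\,dx=1$), choose $\psi_k\in\H_{\Lambda_k}$ with $\psi_k\to\sqrt h$ in $L^2(G)$; then $\|\psi_k\|_{L^2}\to 1$, and the pointwise bound $\big||\psi_k|^2-h\big|\leq|\psi_k-\sqrt h|\,\big(|\psi_k|+\sqrt h\big)$ together with Cauchy--Schwarz gives $\big\||\psi_k|^2-h\big\|_{L^1}\leq\|\psi_k-\sqrt h\|_{L^2}\,\big(\|\psi_k\|_{L^2}+1\big)\to 0$. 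Hence the normalised measures $\|\psi_k\|_{L^2}^{-2}\,|\psi_k|^2\,dx\in S_\mathcal{F}$ converge to $h\,dx$ in total variation, a fortiori weakly$^\ast$. Since the probability measures with continuous density are weak$^\ast$-dense in $S(\mathcal{A})=S(C(G))$ --- for instance by convolving a given measure with an approximate identity on $G$ --- it follows that $S_\mathcal{F}$ is weak$^\ast$-dense in $S(\mathcal{A})$. The estimate \eqref{eq:dirac-eps} is then immediate: $x\mapsto d(e,x)$ lies in $\mathcal{A}$ with $L(d(e,\cdot))\leq 1$ by the triangle inequality, and $\delta_e(d(e,\cdot))=d(e,e)=0$, so choosing $\mu\in S_\mathcal{F}$ with $d_L(\mu,\delta_e)\leq\epsilon$ (possible since $d_L$ metrizes the weak$^\ast$ topology) yields $\int_G d(e,x)\,d\mu(x)=\big|\mu(d(e,\cdot))-\delta_e(d(e,\cdot))\big|\leq L(d(e,\cdot))\,d_L(\mu,\delta_e)\leq\epsilon$.

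I expect the only step requiring a genuine idea, as opposed to routine analysis, to be the first one --- observing that every vector state of $\H_\Lambda$ restricts to a state of $\mathcal{A}_\Lambda$ and thereby realises $|\psi|^2\,dx$ as a liftable measure. Everything afterwards is the standard combination of density of matrix coefficients in $L^2(G)$, the elementary $L^2$-to-$L^1$ comparison above, and weak$^\ast$-density of absolutely continuous probability measures; the only minor points to watch are that the vector-state construction genuinely produces order-unit states (positivity and unitality are automatic) and that renormalising the $\psi_k$ does not affect the $L^1$-convergence (it does not, as $\|\psi_k\|_{L^2}\to 1$).
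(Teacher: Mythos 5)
Your proof is correct, and its crucial first step --- that for a unit vector $\psi\in\H_\Lambda$ the vector state $\langle\psi,\cdot\,\psi\rangle$ restricts to a state on $\mathcal{A}_\Lambda$ whose pullback along $\tau_\Lambda$ is the measure $|\psi|^2\,dx$ --- is exactly the observation the paper's proof rests on. Where you diverge is in how weak$^\ast$-density is extracted from this supply of liftable states. The paper invokes the abstract criterion of \cite[Theorem 4.3.9]{KR83}: the convex set $S_\mathcal{F}$ is weak$^\ast$-dense as soon as $\|f\|=\sup\{|\sigma(f)|:\sigma\in S_\mathcal{F}\}$ for all $f\in\mathcal{A}$, and this norm recovery is immediate from the density of $\H_\mathcal{F}=\sum_{\Lambda}\H_\Lambda$ in $L^2(G)$ (implicitly using that $S_\mathcal{F}$ is convex, which holds since the union over $\Lambda$ is directed). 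You instead argue constructively: approximate $\sqrt h$ in $L^2$ by vectors in $\H_{\Lambda_k}$, upgrade to $L^1$-convergence of the densities via the Cauchy--Schwarz estimate, and reduce an arbitrary measure to one with continuous density by convolution with an approximate identity. Your route is more elementary (no duality-type density criterion) and in fact yields slightly more, namely total-variation approximation of absolutely continuous measures, at the cost of two extra routine approximation steps; the paper's route is shorter but leans on the Kadison--Ringrose theorem. For the final estimate the two arguments are essentially identical: you pass through $d_L(\mu,\delta_e)$ together with $L(d(e,\cdot))\le 1$, while the paper applies weak$^\ast$-density directly to the single Lipschitz function $x\mapsto d(e,x)$; both are valid.
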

\begin{proof}
It is a standard result that density of the convex set $S_{\mathcal F}$ in $S(\mathcal A)$ will follow from the following equality: (see for instance \cite[Theorem 4.3.9]{KR83}):
$$\|f \| = \sup \, \{ | \sigma(f) | : \sigma \in S_\mathcal{F} \} , \qquad ( \forall f \in \mathcal{A}).$$
Let $\H_\mathcal{F} = \sum_{\Lambda \in \mathcal{F}} \H_\Lambda$. Since $\H_\mathcal{F}$ is a dense subspace of $\H$, we have for all $f \in \mathcal{A}$:
$$\| f \| = \sup \, \{ |\langle \psi , f \psi \rangle | : \psi \in \H_\mathcal{F}, \|\psi \| = 1 \}.$$
But $\langle \psi , \cdot \, \psi \rangle \in S_\mathcal{F}$ for all $\psi \in \mathcal{F}$ such that $\| \psi \| = 1$, so that we may conclude that $S_\mathcal{F}$ is a dense subspace of $S(\mathcal{A})$ for the weak* topology.

The state $\mu \in S_{\mathcal F}$ such that Equation \eqref{eq:dirac-eps} holds can then be found as follows. Let $\delta_e$ be the state on $\mathcal{A}$ defined for all $f \in \A$ by $\delta_e(f) = f(e)$.
The function $\Delta : x \mapsto d(e,x)$ is a Lipschitz function, hence an element of $\mathcal{A}$, so by weak* density of $S_\mathcal{F}$ for all $\epsilon>0$ there exists $\mu \in S_{\mathcal {F}}$ such that 
$$ 
\int_G d(e,x) \, d\mu(x) = \left| \mu(\Delta) - \delta_e(\Delta) \right| \leq \epsilon,
$$
as desired.
\end{proof}

Now we can finally state and prove our main Theorem:

\begin{theorem} \label{thm} The net of compact metric spaces $\left( S(\mathcal{A}_\Lambda, L_\Lambda) \right)_{\Lambda \in \mathcal{F}}$ converges to $S(\mathcal{A}, L)$ in Gromov--Hausdorff sense.
\end{theorem}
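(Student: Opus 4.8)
The plan is to combine the three preceding propositions. The bulk of the work has already been done: Proposition~\ref{estimation} shows that whenever $\mu$ is a Borel probability measure liftable to some $\mathcal{A}_\Lambda$, the pair $(\tau_\Lambda, \upsilon_\Lambda^\mu)$ furnishes, via Theorem~\ref{GHstate}, the bound
\[
d_{GH}\bigl( S(\mathcal{A}_\Lambda, L_\Lambda),\, S(\mathcal{A}, L) \bigr) \leq \int_G d(e,x)\, d\mu(x),
\]
and Proposition~\ref{density} provides, for every $\epsilon > 0$, a measure $\mu_\epsilon \in S_\mathcal{F}$ with $\int_G d(e,x)\, d\mu_\epsilon(x) \leq \epsilon$. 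So the statement to be assembled is essentially a cofinality argument: one must check that the index $\Lambda(\epsilon)$ to which $\mu_\epsilon$ lifts can be taken so that the family $\{\Lambda(\epsilon)\}_{\epsilon > 0}$ is cofinal in the directed set $\mathcal{F}$, and that the bound persists for all larger $\Lambda$.

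First I would fix $\epsilon > 0$ and invoke Proposition~\ref{density} to obtain $\mu_\epsilon \in S_\mathcal{F}$ with $\int_G d(e,x)\, d\mu_\epsilon(x) \leq \epsilon$; by definition of $S_\mathcal{F}$ there is some $\Lambda_\epsilon \in \mathcal{F}$ to which $\mu_\epsilon$ lifts. Next I would observe, using the Remark immediately following the definition of liftability, that $\mu_\epsilon$ then lifts to every $\Lambda \supseteq \Lambda_\epsilon$ as well, with $(\mu_\epsilon)_\Lambda = (\mu_\epsilon)_{\Lambda_\epsilon} \circ \tau_{\Lambda_\epsilon | \mathcal{A}_\Lambda}$. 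Hence Proposition~\ref{estimation}, applied with this same $\mu_\epsilon$ but the enlarged index $\Lambda$, yields
\[
d_{GH}\bigl( S(\mathcal{A}_\Lambda, L_\Lambda),\, S(\mathcal{A}, L) \bigr) \leq \int_G d(e,x)\, d\mu_\epsilon(x) \leq \epsilon
\qquad \text{for all } \Lambda \supseteq \Lambda_\epsilon.
\]
Since $\epsilon > 0$ was arbitrary, this is precisely the statement that the net $\bigl( S(\mathcal{A}_\Lambda, L_\Lambda) \bigr)_{\Lambda \in \mathcal{F}}$ converges to $S(\mathcal{A}, L)$ in the Gromov--Hausdorff distance, in the sense of convergence of a net indexed by the directed set $\mathcal{F}$: for every $\epsilon$ there is $\Lambda_\epsilon$ beyond which all terms are within $\epsilon$.

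I do not expect a serious obstacle here; the proof is a short glueing of results already in hand. The one point that deserves a line of care is the monotonicity step --- that the estimate of Proposition~\ref{estimation} applied at an enlarged index $\Lambda \supseteq \Lambda_\epsilon$ still produces the \emph{same} numerical bound $\int_G d(e,x)\, d\mu_\epsilon(x)$, which it does because that bound depends only on $\mu_\epsilon$ and not on $\Lambda$, and because the lift $(\mu_\epsilon)_\Lambda$ exists by the cited Remark. A second, more cosmetic point is to make explicit what ``convergence of a net'' means for Gromov--Hausdorff distance, namely that $d_{GH}(S(\mathcal{A}_\Lambda, L_\Lambda), S(\mathcal{A}, L)) \to 0$ along the directed set $\mathcal{F}$; the display above verifies exactly the $\epsilon$--$\Lambda_\epsilon$ formulation of this. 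With these remarks the proof is complete.
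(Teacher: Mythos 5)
Your proposal is correct and follows exactly the same argument as the paper: fix $\epsilon$, take $\mu$ from Proposition~\ref{density}, note that liftability to some $\Lambda_0$ persists for all $\Lambda \supseteq \Lambda_0$, and apply Proposition~\ref{estimation} to conclude $d_{GH} \leq \epsilon$ cofinally. No differences worth noting.
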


\begin{proof}
  Let $\epsilon > 0$ and let $\mu \in S_\mathcal{F}$ be a liftable probability measure such as in Proposition \ref{density}, so that Equation \eqref{eq:dirac-eps} holds.  In particular, there is a $\Lambda_0 \in \mathcal{F}$ such that $\mu$ is liftable to a state on $\mathcal{A}_{\Lambda_0}$. Then for all $\Lambda \in \mathcal{F}$ containing $\Lambda_0$, the measure $\mu$ is liftable to a state on $\mathcal{A}_\Lambda$. Applying Proposition \ref{estimation} then yields for all $\Lambda \in \mathcal{F}$ such that $\Lambda_0 \subseteq \Lambda$ we have
  $$
  d_{GH}( S(\mathcal{A}_\Lambda, L_\Lambda) , S(\mathcal{A}, L) ) \leq \epsilon,$$
  which completes the proof. 
\end{proof}

\newcommand{\noopsort}[1]{}\def\cprime{$'$}


\begin{thebibliography}{99}

\bibitem{AK18}
K.~Aguilar and J.~Kaad.
\newblock The {P}odle\'{s} sphere as a spectral metric space.
\newblock {\em J. Geom. Phys.} 133 (2018)  260--278.

\bibitem{AKK22}
K.~Aguilar, J.~Kaad, and D.~Kyed.
\newblock The {P}odle\'{s} spheres converge to the sphere.
\newblock {\em Comm. Math. Phys.} 392 (2022)  1029--1061.

\bibitem{Alf71}
E.~M. Alfsen.
\newblock {\em Compact convex sets and boundary integrals}, volume Band 57 of
  {\em Ergebnisse der Mathematik und ihrer Grenzgebiete [Results in Mathematics
  and Related Areas]}.
\newblock Springer-Verlag, New York-Heidelberg, 1971.

\bibitem{Ber19}
T.~Berendschot.
\newblock Truncated geometry.
\newblock Master's thesis, Radboud University Nijmegen, 2019.

\bibitem{CS19}
A.~Chamseddine and W.~D. van Suijlekom.
\newblock A survey of spectral models of gravity coupled to matter.
\newblock In {\em Advances in noncommutative geometry---on the occasion of
  {A}lain {C}onnes' 70th birthday}, pages 1--51. Springer, Cham, 2019.

\bibitem{C89}
A.~Connes.
\newblock Compact metric spaces, {F}redholm modules, and hyperfiniteness.
\newblock {\em Ergodic Theory Dynam. Systems} 9 (1989)  207--220.

\bibitem{CS20}
A.~Connes and W.~D. van Suijlekom.
\newblock Spectral truncations in noncommutative geometry and operator systems.
\newblock {\em Comm. Math. Phys.} 383 (2021)  2021--2067.

\bibitem{CS21}
A.~Connes and W.~D. van Suijlekom.
\newblock Tolerance relations and operator systems.
\newblock {\em Acta Sci. Math. (Szeged)} 88 (2022)  101--129.

\bibitem{Hek21}
E.~Hekkelman.
\newblock Truncated geometry on the circle.
\newblock {\em Lett. Math. Phys.} 112 (2022)  Paper No. 20, 19.

\bibitem{KK23}
J.~Kaad and D.~Kyed.
\newblock A comparison of two quantum distances.
\newblock {\em Mathematical Physics, Analysis and Geometry} 26 (2023).

\bibitem{KR83}
R.~V. Kadison and J.~R. Ringrose.
\newblock {\em Fundamentals of the theory of operator algebras. {V}ol. {I}},
  volume 100 of {\em Pure and Applied Mathematics}.
\newblock Academic Press Inc. [Harcourt Brace Jovanovich Publishers], New York,
  1983.
\newblock Elementary theory.

\bibitem{Ker03}
D.~Kerr.
\newblock Matricial quantum {G}romov-{H}ausdorff distance.
\newblock {\em J. Funct. Anal.} 205 (2003)  132--167.

\bibitem{Kna86}
A.~W. Knapp.
\newblock {\em Representation theory of semisimple groups}, volume~36 of {\em
  Princeton Mathematical Series}.
\newblock Princeton University Press, Princeton, NJ, 1986.
\newblock An overview based on examples.

\bibitem{Lat16}
F.~Latr\'emoli\`ere.
\newblock The quantum {G}romov-{H}ausdorff propinquity.
\newblock {\em Trans. Amer. Math. Soc.} 368 (2016)  365--411.

\bibitem{Lat18}
F.~Latr\'{e}moli\`ere.
\newblock The {G}romov-{H}ausdorff propinquity for metric spectral triples.
\newblock {\em Adv. Math.} 404 (2022)  Paper No. 108393, 56.

\bibitem{LS23}
M.~Leimbach and W.~D. van Suijlekom.
\newblock Gromov--{H}ausdorff convergence of spectral truncations for
  low-dimensional tori, arXiv:2302.07877.

\bibitem{Li09}
H.~Li.
\newblock Compact quantum metric spaces and ergodic actions of compact quantum
  groups.
\newblock {\em J. Funct. Anal.} 256 (2009)  3368--3408.

\bibitem{Rac91}
S.~T. Rachev.
\newblock {\em Probability metrics and the stability of stochastic models}.
\newblock Wiley Series in Probability and Mathematical Statistics: Applied
  Probability and Statistics. John Wiley \& Sons, Ltd., Chichester, 1991.

\bibitem{RR98}
S.~T. Rachev and L.~R\"{u}schendorf.
\newblock {\em Mass transportation problems. {V}ol. {I}}.
\newblock Probability and its Applications (New York). Springer-Verlag, New
  York, 1998.
\newblock Theory.

\bibitem{Rie22}
M.~A. Rieffel.
\newblock Convergence of {F}ourier truncations for compact quantum groups and
  finitely generated groups.
\newblock arXiv:2210.00387.

\bibitem{Rie98}
M.~A. Rieffel.
\newblock Metrics on states from actions of compact groups.
\newblock {\em Doc. Math.} 3 (1998)  215--229.

\bibitem{Rie00}
M.~A. Rieffel.
\newblock Gromov-{H}ausdorff distance for quantum metric spaces.
\newblock {\em Mem. Amer. Math. Soc.} 168 (2004)  1--65.

\bibitem{Rie02}
M.~A. Rieffel.
\newblock Matrix algebras converge to the sphere for quantum
  {G}romov-{H}ausdorff distance.
\newblock {\em Mem. Amer. Math. Soc.} 168 (2004)  67--91.

\bibitem{Toy23}
R.~Toyota.
\newblock Quantum {G}romov-{H}ausdorff convergence of spectral truncations for
  groups with polynomial growth, 2309.13469.

\bibitem{Sui21}
W.~D. van Suijlekom.
\newblock Gromov-{H}ausdorff convergence of state spaces for spectral
  truncations.
\newblock {\em J. Geom. Phys.} 162 (2021)  Paper No. 104075, 11.

\end{thebibliography}
\end{document}